 \newcommand{\labbel}{\label}
 \newtheorem{theorem}{Theorem}[section]
 \newtheorem{lemma}[theorem]{Lemma}
 \newtheorem{proposition}[theorem]{Proposition}
 \newtheorem{corollary}[theorem]{Corollary}
 \newtheorem*{theorem*}{Theorem}
 \newtheorem*{corollary*}{Corollary}
 \newtheorem*{proposition*}{Proposition}
 \theoremstyle{definition}
 \newtheorem{definition}[theorem]{Definition}
 \newtheorem*{definition*}{Definition}
 \theoremstyle{remark}
 \newtheorem*{disclaimer*}{Disclaimer}
 \DeclareMathOperator{\slim}{slim}
\DeclareMathOperator{\sep}{sep}
 \newcommand{\conc}{^{\smallfrown}}
\begin{document}
 \title{A clean way to separate sets of surreals}

 \author{Paolo Lipparini}
 \address{Dipartimento di Matematica\\Viale della Ricerca Scientifica\\
 Universit\`a di Roma Tor Segnata  \\I-00133 ROME ITALY}
 \urladdr{http://www.mat.uniroma2.it/\textasciitilde lipparin}

\keywords{Surreal number,  sign expansion, s-limit, separating element}

 \subjclass[2010]{06A05}

 \thanks{Work partially supported by 
PRIN 2012 ``Logica, Modelli e Insiemi''.
Work  performed under the auspices of G.N.S.A.G.A.}

 \begin{abstract}
Let surreal numbers be defined by means of sign sequences.
We give a proof that  if $S < T$ are sets of surreals, then there is some 
surreal $w$
such that $S < w < T$.   
The classical proof is simplified
by observing that, 
for every set  $S$  of surreals,
there exists a surreal $s$ such that, 
for every surreal $w$, we have 
 $S<w$ if and only if    
the restriction of $w$
to the length of $s$  is $ \geq s$.
Hence $S < w < T$
if and only if 
 $w$  satisfies the above
condition, as well as its symmetrical version
with respect to $T$.
It is now enough to check that if $S < T$,
then the two conditions are compatible. 
 \end{abstract}

 \maketitle

\section{Introduction} \labbel{intro} 

We prove the following theorems (the reader who does not
know the definitions will find full
details  below). 

\begin{theorem} \labbel{1}
If $S$ is any set of surreals,
then there exists a surreal $s$ such that, 
for every surreal $z$, the following conditions are equivalent.
 \begin{enumerate}  
 \item
$u<z$, for every $u \in S$;  
\item
the restriction of $z$ to the length of $s$ is
$\geq s$.  
   \end{enumerate}  
 \end{theorem}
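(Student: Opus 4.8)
The plan is to read both conditions as assertions that $z$ lies in a certain upward-closed subclass of the surreals, and then to manufacture $s$ so that these two subclasses coincide. Throughout write $\ell(x)$ for the length of the sign sequence $x$, and recall that surreals are compared by reading signs left to right under $- < \text{(termination)} < +$. Condition (1) says that $z$ belongs to the class $U=\{z : u<z \text{ for all } u\in S\}$ of strict upper bounds of $S$, which is clearly upward closed. Condition (2) says $z$ belongs to $C_s=\{z : z\restriction \ell(s)\ge s\}$. So the whole theorem reduces to exhibiting one surreal $s$ with $C_s=U$.

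The first step is a purely combinatorial description of $C_s$. I would show, by comparing sign sequences position by position, that $z\restriction\ell(s)\ge s$ holds precisely when $z$ exceeds the ``lower gap'' $g_s:=s\conc(-^{\mathbf{On}})$, that is, $s$ followed by a proper class of minus signs; equivalently $C_s$ is the final segment of the surreals with lower boundary $g_s$. The point is that $z\restriction\ell(s)\ge s$ can hold in two ways --- either $z$ agrees with $s$ throughout $\ell(s)$ (so that every extension of $s$, in particular $s$ followed by any run of minuses, still belongs to $C_s$), or $z$ first diverges from $s$ strictly below $\ell(s)$ on the larger side --- and both are captured by $z>g_s$. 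Thus the task becomes: choose $s$ so that $g_s$ is exactly the lower boundary of $U$.

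Next I would build $s$ by transfinite recursion, tracking the supremum of $S$ from above. Having fixed $t_\alpha:=s\restriction\alpha$, let $S_\alpha=\{u\in S : u\restriction\alpha=t_\alpha\}$ be the elements that have so far followed $s$ (all of length $\ge\alpha$). If some surviving $u$ has $\ell(u)>\alpha$ and $u(\alpha)=+$, set $s(\alpha)=+$ and continue. If not, but $t_\alpha$ itself lies in $S$ (an element terminating exactly here), set $s(\alpha)=+$ and stop, so that $s=t_\alpha\conc(+)$. If neither, and every survivor drops, set $s(\alpha)=-$ and continue. Finally, if $S_\alpha=\emptyset$, stop with $s=t_\alpha$. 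The hypothesis that $S$ is a \emph{set} is used exactly here: the lengths $\ell(u)$ are bounded by some ordinal $\lambda$, so no element can keep following $t_\alpha$ once $\alpha\ge\lambda$, whence the recursion halts below $\lambda$ and $s$ is an honest surreal. I would stress that the treatment of a terminating element is the delicate design choice: such an element is a value that $z$ must strictly exceed, so --- just like an element continuing with $+$ --- it must drive the sign \emph{up} rather than down.

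It then remains to verify $C_s=U$ in both directions. For (2)$\Rightarrow$(1) I would show that every $u\in S$ satisfies $u<g_s$: each $u$ either follows $s$ until it diverges downward (because its own $+$ would have forced $s(\alpha)=+$, so a genuine divergence can only be a drop, or a termination against an $s$-sign of $+$) or is overtaken at the terminal $+$; in every case the first discrepancy puts $u$ below $g_s$, whence $u<g_s<z$ for any $z\in C_s$. For the reverse (1)$\Rightarrow$(2) I would prove the contrapositive: if $z<g_s$, then $z$ must diverge downward from $s$ at some position $\delta<\ell(s)$ with $s(\delta)=+$; but by construction $s(\delta)=+$ only because some $u\in S$ either continues upward there or terminates there, and any such $u$ satisfies $u\ge z$, so $z\notin U$. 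I expect this reverse direction to be the main obstacle: it is where one must know that a $+$ was never placed gratuitously but always on the testimony of an actual element of $S$, and it is precisely there that the special handling of terminating elements earns its keep.
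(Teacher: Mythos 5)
Your proposal is correct and follows essentially the same route as the paper: the transfinite recursion you describe computes exactly the surreal $\sup^* S$ of Definition~\ref{supast} (your ``terminating survivor'' clause reproduces the paper's $\max S \conc +$ case, and your other clauses reproduce the stabilization case), and your two verification steps correspond precisely to Proposition~\ref{s} (every $u\in S$ diverges \emph{downward} from $s$, hence $u_{\restriction \ell(s)}<s$) and to the two directions of Theorem~\ref{1b}. The only presentational difference is your proper-class ``gap'' $g_s$, which is an informal stand-in for the paper's set-length prolongment-by-minuses device (the surreal $\hat{\hat{s}}$ of Theorem~\ref{ult} and Remark (a) of Section~\ref{rmksec}) and causes no difficulty.
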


\begin{theorem} \labbel{2}
If $S$ and $T$ are sets of surreals and 
$u< v $, for every $u \in S$ and $v \in T$,
then there exists a surreal $w$ such that
  $u< w < v $, for every $u \in S$ and $v \in T$.
 \end{theorem}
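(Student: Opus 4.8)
The plan is to deduce Theorem \ref{2} from Theorem \ref{1} together with its mirror image, exactly as the abstract suggests. First I would apply Theorem \ref{1} to $S$ to obtain a surreal $s$, of length $\alpha:=\ell(s)$, with the property that $S<z$ iff $z\restriction\alpha\ge s$ for every surreal $z$. To get the symmetric statement for $T$ I would apply Theorem \ref{1} not to $T$ but to $S':=\{-v:v\in T\}$, where $-x$ denotes the sign-reversal of $x$: since $z<T$ iff $S'<-z$, and sign-reversal commutes with restriction and reverses the order, this yields a surreal $t:=-s'$, of length $\beta:=\ell(t)$, such that $z<T$ iff $z\restriction\beta\le t$ for every $z$. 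Now $S<w<T$ holds precisely when $w\restriction\alpha\ge s$ and $w\restriction\beta\le t$, so Theorem \ref{2} reduces to producing a single $w$ meeting both inequalities.

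Two candidates are already at hand: $s$ itself satisfies the first condition, since $s\restriction\alpha=s\ge s$ gives $S<s$; and $t$ satisfies the second, since $t<T$. Hence it suffices to show that one of them also satisfies the other condition, i.e. that $s<T$ or $S<t$, and this is the step I expect to be the crux. I would argue by contradiction: assume $s\restriction\beta>t$ and $t\restriction\alpha<s$, and examine the first ordinal $\rho$ at which the sign sequences $s$ and $t$ differ (counting ``the sequence has ended'' as a value lying strictly between $-$ and $+$). Writing $p:=s\restriction\rho=t\restriction\rho$ for their common initial segment, a short case analysis on the two assumed inequalities should force $\rho<\alpha$, $\rho<\beta$, and, crucially, $s(\rho)=+$ while $t(\rho)=-$: if instead $s$ ended at $\rho$ then $t\restriction\alpha=p=s$ would contradict $t\restriction\alpha<s$, and the opposite internal assignment $s(\rho)=-,\ t(\rho)=+$ would force $s\restriction\beta<t$.

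With the signs pinned down, $s$ extends $p\conc+$ throughout its first $\alpha$ coordinates, so by prefix-monotonicity of the lexicographic order every $z$ with $S<z$ satisfies $z\restriction(\rho+1)\ge s\restriction(\rho+1)=p\conc+$. Taking $z=p$, whose restriction is $p<p\conc+$, shows that $S<p$ fails, i.e. there is $u\in S$ with $u\ge p$; the mirror argument applied to $t$ and $p\conc-$ produces $v\in T$ with $v\le p$. Then $v\le p\le u$ gives $v\le u$ with $u\in S$ and $v\in T$, contradicting the hypothesis $S<T$. Therefore $s<T$ or $S<t$, and taking $w:=s$ or $w:=t$ accordingly finishes the proof. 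The degenerate cases $S=\varnothing$ or $T=\varnothing$ are automatic, since then the relevant defining inequality is vacuous and one of the two candidates works trivially; I would dispose of them in a line.
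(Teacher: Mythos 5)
Your proposal is correct, and its overall architecture (reduce Theorem \ref{2} to Theorem \ref{1} plus its mirror image, then show that the bounding element $s$ for $S$ or the bounding element $t$ for $T$ already separates) matches the paper's; the statement you end up proving is exactly the paper's Corollary \ref{corult}. Where you genuinely diverge is in the crux. The paper avoids any case analysis on how $s$ and $t$ relate: it prolongs $s$ by a string of minuses to a length $\eta$ exceeding every element of $T$, checks directly that this prolongment $\hat{\hat{s}}$ lies below every $v\in T$ (the first disagreement either falls below $\ell(s)$, where $s\leq v_{\restriction \ell(s)}$ decides it, or at or above $\ell(s)$, where the added minus decides it) and above every $u\in S$ (by Theorem \ref{1b}), and only afterwards truncates to recover $s$ or $t$ itself via Corollary \ref{corlen}. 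You instead argue by contradiction, locating the first ordinal $\rho$ where $s$ and $t$ disagree, pinning the signs to $s(\rho)=+$, $t(\rho)=-$, and using the two characterizations to squeeze the common prefix $p$ between some $v\in T$ and some $u\in S$, contradicting $S<T$. Both arguments are sound; yours trades the paper's ``prolong, then truncate'' trick for precisely the division into cases that the paper set out to avoid (see the discussion in Section \ref{separat}), but in exchange it needs no auxiliary ordinal $\eta$ and produces a concrete witness $p$ of the failure. Two small points to tidy up: you should note, or prove in a line, that sign-reversal is an order-reversing involution commuting with restriction, since the paper never introduces negation and simply invokes ``the symmetrical version'' of Theorem \ref{1b}; and your case analysis has a third degenerate subcase ($t$ ending at $\rho$ while $s(\rho)$ is defined, which forces $s_{\restriction \beta}=p=t$), handled by the mirror of the subcase you wrote out but worth mentioning explicitly.
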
  

Theorems \ref{1} and \ref{2} follow from, respectively, Theorems \ref{1b}
and \ref{ult}, which shall be proved below.

The present note is intended to be as much self-contained
as possible. The only prerequisite is a basic knowledge of ordinal numbers.
The main facts about ordinals are stated without
proof in Subsection \ref{ord} below. 
Complete details for the construction of
 surreals  as sign sequences
are given in Subsection \ref{surr}.
Subsection \ref{lemin}
contains the definition of an initial segment 
of a surreal, and a useful lemma about it.
Section \ref{separat} contains a brief discussion
about the usefulness of separating sets of surreals,
as well as a few comments about the subsequent proofs.  
Strengthenings   of 
Theorems \ref{1} and  \ref{2}  will be proved in Sections
\ref{bound} and \ref{separ}, respectively.
In Section \ref{separ}  the shortest surreal $w$ satisfying Theorem \ref{2}
will be also evaluated. 
Section \ref{rmksec} contains a few further remarks.

\section{Preliminaries} \labbel{prel} 

\subsection{Ordinals} \labbel{ord} 
Recall that a linearly ordered set
$(W, <)$ is  \emph{well-ordered} if every nonempty subset of $W$ 
has a minimum.
If $w \in W$, the subset 
$W _{<w} = \{v \in W  \mid v < w \}  $
of $W$ is called a (proper) \emph{initial segment}
of $W$ and inherits from $W$ the structure of a well-ordered set.   
We shall usually consider $W$ itself as an (improper)
initial segment of $W$.

It turns out that any two well-ordered sets are comparable, in the sense
that they are either isomorphic, or one is isomorphic
to some proper initial segment of the other. Exactly one of the above possibilities
occurs. One can choose a representative for any isomorphism class
of well-ordered sets. These representatives are called \emph{ordinals}
and can be chosen in such a way that each ordinal is the set of all
smaller ordinals and the order relation $<$ 
coincides with the membership 
relation $\in$. That is, if $ \beta \in \alpha$ and $\alpha$ is an  ordinal,
then $\beta$ is an ordinal which is an initial segment of $\alpha$
and, conversely, if $\beta$ is an ordinal which is a proper initial segment of 
the ordinal $\alpha$, then $\beta \in \alpha $. 
Ordinals shall be always denoted by small Greek  letters
such as $\alpha$, $\beta$, $\gamma \dots$ and any such symbol is 
always intended to represent an ordinal, even when not explicitly
mentioned.  
The exact definition of an ordinal shall not be relevant here.
We shall make heavy use of the assumption that 
an ordinal is well-ordered.  
For every ordinal $\alpha$,
there is the smallest ordinal $\beta$ which is strictly larger than $\alpha$.
Such a $\beta$ is denoted by $\alpha+1$
(one can actually define a sum between ordinals, but we shall
not need it here).
An ordinal $\beta$ is a \emph{successor ordinal}
if it has the form $\alpha+1$, for some ordinal $\alpha$.
  Otherwise, $\beta$ is a \emph{limit ordinal}.
According to the above definition,
the smallest ordinal $0$
(the order-type of the empty order) 
 is considered a limit ordinal.\footnote{\ 
In some cases, it is convenient to consider $0$
to be neither limit nor successor,
that is, the only one of his kind; however,
here it will be more convenient to consider $0$
as a limit ordinal.}
  
Full details about ordinals can be found in Jech \cite{J} or in any 
textbook on set theory.  

\subsection{Surreals} \labbel{surr} 
A \emph{surreal} is a function $s$ from some 
ordinal  $\ell (s)$  to the set 
$\{ +, - \}$. The ordinal $\ell(s)$ 
 depends on $s$ 
and is called the \emph{length} of
$s$.
Notice that we allow 
$\ell(s)=0$; in this case $s$ is the empty sequence. 
The exact nature of the symbols
$+$ and $-$ does not concern us,
one can take any two distinct elements, say,
$+=(0, \emptyset  )$ and $-=(\emptyset , 0 )$.
Surreal numbers have been introduced by
Conway \cite{C}. See Gonshor \cite{G} for a
full  presentation of
the surreals considered, as above, as sequences
of $+$'s and  $-$'s. 
 Notice that  Conway introduced surreals in a different
fashion.
The above way of representing a surreal
 is called its \emph{sign sequence} 
and is also due to Conway.
 Siegel \cite{S} is another useful reference
about surreals.

Recall that an ordinal is equal to the set
of all smaller ordinals, thus 
$\ell(s)$ is the first ordinal
for which $s$ is not defined. 
If $\gamma \geq \ell (s)$ is an ordinal, we  say
that $s(\gamma)$ is \emph{undefined}. 
If $t$ is another surreal, then, with some abuse
of terminology, we write 
$s( \gamma ) =  t( \gamma )$
to mean that  
$s( \gamma ) $ and $ t( \gamma )$
are either both undefined, 
or  both defined and equal. 
Thus
$s( \gamma ) \neq t( \gamma )$
 means that either  
$s( \gamma ) $ and $ t( \gamma )$
are both defined and are distinct, or
 exactly one between 
$s( \gamma ) $ and $ t( \gamma )$
is defined.
In particular $s$ and  $t$ are distinct surreals if and only if
there is some ordinal  
$\gamma$ such that 
$s( \gamma ) \neq t( \gamma )$
in the above sense\footnote{\ Strictly formally, it would be inappropriate 
to consider a surreal as a function
from the class of all ordinals to a set with three
elements 
$\{ +, undefined, \allowbreak  - \}$.
First, we would like a surreal to be a set, rather
than a proper class. 
The above difficulty could be overcome some way.
The point is that if $s( \gamma )$
is undefined, then 
$s( \delta )$
is undefined, too, for every 
$\delta \geq \gamma $.
While it might be intuitively simpler to think of
a surreal as a function to  
 $\{ +, undefined, - \}$, 
it should be clear that $undefined$
plays  a very special role: everything should be 
$undefined$ after the first occurrence of $undefined$.
In spite of the above considerations, we shall be quite informal 
and we shall write $s( \gamma )$, or speak of the value
assumed by $s$ at $\gamma$ even when 
$\gamma \geq \ell (s)$.
When we want $s( \gamma ) $ to be either $+$ or $-$,
we shall explicitly say that   $s( \gamma ) $ is \emph{defined}.}.

We write $s < t$ 
to mean that $s \neq t$ and  
$s( \gamma ) < t( \gamma )$,
where $\gamma$ is the least ordinal such that 
 $s( \gamma ) \neq t( \gamma )$
and with the provision that 
$- < undefined < t$. 
Without the above conventions and more formally,
$s < t$  means that either

(i) there is some ordinal $\gamma$ such that 
 $\gamma < \ell(s)$, 
 $\gamma < \ell(t)$,
$s(\gamma) \neq t( \gamma )$ and
if $\gamma$ is the least such ordinal, 
then 
$s(\gamma) = - $ and $  t( \gamma )=+$, or

(ii)  $\ell(s) < \ell(t)$ 
and $t(\ell(s))=+$, that is,
$t$ is longer than $s$ and 
$t$ assumes the value $+$ at   
the first place at which $s$ is undefined, 
or 

(iii) 
$\ell(s) > \ell(t)$ 
and $s(\ell(t))=-$, that is,
$s$ is longer than $t$ and 
$s$ assumes the value $-$ at   
the first place at which $t$ is undefined.

The class of all surreals is linearly ordered by $<$.

\subsection{A useful lemma about initial segments} \labbel{lemin}

If $\gamma \leq  \ell (s)$,
we define the \emph{$\gamma$-initial segment of $s$}, in symbols, 
$ s _{ \restriction  \gamma }  $  as the surreal 
$t$ such that 
$\ell(t) = \gamma $
and $t(\delta) = s( \delta )$, 
for every $\delta< \gamma $.  
Sometimes we shall allow the possibility 
$\gamma > \ell (s)$; in that case 
we put $ s _{ \restriction  \gamma } = s $. 
If 
$ t= s _{ \restriction  \gamma }  $, for some $\gamma$,
 we say that $t$  is an
\emph{initial segment} of $s$ and
that $s$ is a \emph{prolongment} of $t$.  
If $\gamma < \ell (s)$, we say that
the initial segment is \emph{proper}.
Intuitively, 
$t$  is an
initial segment of $s$ 
if $t$ can be ``prolonged'' to $s$ 
by setting further values as defined.   
The \emph{$ \leq \gamma$-initial segment 
$ s _{ \restriction \leq  \gamma } $ of $s$} is
defined to be
$ s _{ \restriction ( \gamma +1)}  $.
In other words, the $ \leq \gamma$-initial segment of $s$ is
the surreal 
$t$ such that 
$\ell(t) \leq \gamma +1 $
and $t(\delta) = s( \delta )$, 
for every $\delta \leq  \gamma $.

\begin{lemma} \labbel{lem}
Let $s$, $t$  and  $u$ be surreals. 
  \begin{enumerate}   
 \item 
Suppose that $s \neq t$ and $\gamma$ is the first ordinal such that 
 $s( \gamma )  \neq t ( \gamma )$.
Then 
$ s _{ \restriction  \gamma } = t _{ \restriction  \gamma }  $.
 Moreover,
both $s( \gamma'  )$ and  $t( \gamma ')$ are defined, for every
$\gamma' < \gamma $.
\item
If $ s \leq t$,
then   
$ s _{ \restriction  \gamma }  \leq t _{ \restriction  \gamma }  $,
for every ordinal $\gamma$.
\item
If $\gamma$ is an ordinal and    
$ s _{ \restriction  \gamma }  <  t _{ \restriction  \gamma }  $,
then $s<t$. 
Actually,
$ s _{ \restriction  \delta  }  <  t _{ \restriction  \delta  }  $,
for every $\delta \geq \gamma $. 
\item 
If $\alpha$ is an ordinal,
$\ell(u) \leq \alpha $, $\ell(s) \leq \alpha $ 
 and
$u _{ \restriction \leq \gamma } \leq  s _{ \restriction \leq \gamma }    $,
for every $\gamma < \alpha $, then   
$u  \leq s    $. 
  \end{enumerate} 
 \end{lemma}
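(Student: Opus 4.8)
The unifying observation I would extract first is a single, case-free description of the values of an initial segment: for every $\delta$ one has $s_{\restriction\gamma}(\delta)=s(\delta)$ when $\delta<\gamma$, while $s_{\restriction\gamma}(\delta)$ is undefined when $\delta\geq\gamma$. This reconciles the two regimes in the definition (whether $\gamma\leq\ell(s)$ or $\gamma>\ell(s)$) and reduces every part of the lemma to locating the first position at which two sign sequences disagree and asking whether that position survives a truncation. I would record this description once and invoke it silently thereafter.

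For part (1) I would argue by contradiction that no value $s(\gamma')$ or $t(\gamma')$ with $\gamma'<\gamma$ is undefined. If some were, take the least such $\gamma'$; then $\gamma'<\gamma$ forces $s(\gamma')=t(\gamma')$ by minimality of $\gamma$, so both $s$ and $t$ first become undefined exactly at $\gamma'$ and agree below it, whence $s=t$, contradicting $s\neq t$. Consequently $\ell(s),\ell(t)\geq\gamma$, both restrictions have length $\gamma$, and they agree throughout, giving $s_{\restriction\gamma}=t_{\restriction\gamma}$.

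Parts (2) and (3) I would handle by comparing the first disagreement $\delta$ of $s$ and $t$ with the truncation height $\gamma$. For (2), if $\gamma\leq\delta$ the two restrictions agree below $\gamma$ and hence coincide; if $\gamma>\delta$ the disagreement at $\delta$ is retained, with the very same values $s(\delta),t(\delta)$, so the (strict) inequality is preserved. For (3) the crucial step runs in reverse: the first disagreement $\epsilon$ of $s_{\restriction\gamma}$ and $t_{\restriction\gamma}$ must satisfy $\epsilon<\gamma$, for otherwise both values would be undefined and could not disagree; at such $\epsilon<\gamma$ the restricted values equal $s(\epsilon),t(\epsilon)$ and the sequences already agree below $\epsilon$, so $\epsilon$ is the first disagreement of $s$ and $t$ with the same ordering, giving $s<t$. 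The ``actually'' clause then follows because $\epsilon<\gamma\leq\delta$ for every $\delta\geq\gamma$, so the same disagreement survives every higher truncation.

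Part (4) I would prove by contradiction: suppose $s<u$ and let $\epsilon$ be their first disagreement. Since $s$ and $u$ are both undefined beyond $\max(\ell(s),\ell(u))\leq\alpha$, we get $\epsilon<\alpha$, so the hypothesis applies at $\gamma=\epsilon$ and yields $u_{\restriction\leq\epsilon}\leq s_{\restriction\leq\epsilon}$. But $\epsilon<\epsilon+1$, so truncating at $\epsilon+1$ preserves the values at $\epsilon$; hence $s_{\restriction\leq\epsilon}$ and $u_{\restriction\leq\epsilon}$ first disagree exactly at $\epsilon$ with $s(\epsilon)<u(\epsilon)$, forcing $s_{\restriction\leq\epsilon}<u_{\restriction\leq\epsilon}$ and contradicting the inequality just obtained. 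Thus $u\leq s$. The main obstacle, and the only place where care is really needed, will be the bookkeeping around undefined positions: keeping straight which truncation regime applies and honouring the extended convention $-<undefined<+$ precisely at the boundary position $\epsilon$, where one of the two sequences may already have ended. The case-free value description of the first step is exactly what I would use to neutralise this, so that each part becomes a short comparison of an ordinal position against the truncation height.
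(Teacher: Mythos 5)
Your proof is correct and takes essentially the same approach as the paper's: part (1) by the same contradiction using the fact that undefinedness propagates upward, parts (2) and (3) by direct inspection of the first position of disagreement (which the paper dismisses as trivial from the definitions), and part (4) by locating the first disagreement below $\alpha$ and comparing the $\leq\gamma$-truncations there. The only cosmetic difference is that you run part (4) by contradiction where the paper argues directly that $u\neq s$ forces $u<s$; the content is identical.
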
 

\begin{proof} 
(1)  
It is trivial from the definitions 
that $ s _{ \restriction  \gamma } = t _{ \restriction  \gamma }  $.
Thus, if by contradiction $s (  \gamma' )$ is undefined, for some
$\gamma' < \gamma $, then also 
$t ( \gamma') = s (  \gamma' )$
 is undefined, hence, by a comment in footnote 2, both 
$s$ and $t$ are undefined at values larger than $\gamma'$, hence they cannot
be different at $\gamma$, a contradiction.  

 (2) and (3)  are trivial from the definitions. 

Notice that if $ s < t$,
then   it is not true that
$ s _{ \restriction  \gamma }  < t _{ \restriction  \gamma }  $,
for every $\gamma$. Actually,
if $\gamma= 0$, then 
 $ s _{ \restriction  \gamma } $ is the empty sequence, 
for every surreal $s$, thus 
$ s _{ \restriction  0} = t _{ \restriction  0} $,
for every pair $s$, $t$ of surreals.

(4) This is true if 
$u  = s    $.
Otherwise, 
there is $\gamma $ such that 
$u( \gamma ) \neq s ( \gamma ) $.
We have $\gamma < \alpha $,
for any such $\gamma$, 
since, $u( \gamma ) $ and $ s ( \gamma ) $ being  different, at least one
of them is defined, and 
$u  $ and $  s    $
have both length $ \leq \alpha$.
Choose $\gamma$ minimal such that 
$u( \gamma ) \neq s ( \gamma ) $.
Then 
$u _{ \restriction \gamma  } = s _{ \restriction \gamma  }   $,
by (1); moreover, $u ( \gamma ) < s ( \gamma )$, by the definition 
of the order on the surreals,
since  $u( \gamma ) \neq s ( \gamma ) $ and 
$u _{ \restriction \leq \gamma } \leq  s _{ \restriction \leq \gamma }    $,
by assumption.
Then, again by the definition 
of the order on the surreals,
$u _{ \restriction \leq \gamma } <   s _{ \restriction \leq \gamma }    $,
that is,
$u _{ \restriction ( \gamma +1)} <   s _{ \restriction ( \gamma +1)}    $.
Since $\gamma< \alpha $,
then $\gamma+1  \leq \alpha $,
hence we get   
$u = u _{ \restriction \alpha } <   s _{ \restriction \alpha  } = s   $
from (3).
\end{proof}

\section{Separating sets of surreals} \labbel{separat} 

If $S$ and $T$ are sets of surreals, 
then $S<T$ means that $s<t$, 
for every $s \in S$ and $t \in T$.
If $S= \{ s\} $ is a singleton, we shall simply write
 $s< T$  and a similar convention applies if  $T = \{ t\} $.
Notice that the shorthand is consistent with the notation 
$s<t$.
If $S <  u < T$, we shall say that 
$u$ \emph{separates} $S$ and $T$
or simply that $u$ is a  \emph{separating element},
when $S$ and $T$ are clear from the context.

We now want to show that if 
$S < T$ are sets, then there actually exists 
some $u$ separating $S$ and $T$.
This is a fundamental theorem, when surreals are defined as above as sign
sequences, since many constructions
(e.~g., the definitions of the sum of surreals and a big deal 
of other functions and operations)  
rely on the existence of such an $u$.
When surreals are defined ``a la Conway'',
the existence of such an $u$ is, in a sense, 
part of the definition itself of the surreals.
However, Theorems like \ref{boh} below are useful also 
in such a framework, since    
they provide a description of some appropriate $u$ 
in terms of the sign expansions of the elements
of $S$ and $T$.

The existence of some $u$
separating $S$ and $T$ is well-known, e.~g.,
\cite[Theorem 2.1]{G}. However, we believe 
that  our proof  is particularly clean.
In the theory of surreals defined as sign sequences
most proofs proceed by considering several cases---
frequently, a very large number of them.
It seems that we have reduced the number of cases 
in the construction 
of a separating element to the minimum.
In fact, the definition of the shortest separating element
$ \hat{ s } _{ \restriction \gamma }   $ in Theorem \ref{thmthm}  
is not given by cases.
In Theorem \ref{ult} a simpler proof of the existence of
some separating element (possibly, not the shortest) is provided;
the definition of $ \hat {\hat{ s }}$ given in that proof is not by cases, either.

It should be pointed out, however, that the constructions
in Theorems \ref{ult} and \ref{thmthm}
rely on Definition \ref{supast}, which furnishes the shortest 
element $s$ satisfying Theorem \ref{1}.     
Definition \ref{supast} does indeed consider two cases.
This  looks quite natural, anyway,
since the  cases to be considered are when $S$ has a 
maximum and when $S$   has no maximum.
The number of cases (i.e., two!) is 
surely small, in comparison  with  the  usual 
habit in the theory of surreals.
There is also the possibility of giving Definition \ref{supast} 
in a uniform way not involving a division into cases.
See Remark (e) in Section \ref{rmksec}.
All the remaining parts of the proofs in the present note
essentially  rely only 
on the trivial monotonicity properties of restrictions, as
stated in Lemma \ref{lem}.

Finally, let us notice  two curious facts. Though, of course,
we want the strict inequalities $s < u < t$,
for every $t \in  T$ 
and $s \in S$, the proofs eventually 
involve non-strict inequalities expressed in terms of  $\leq$.  
This turns out to be  not particularly surprising, however,
after one  looks at Theorem \ref{1b} below.
Moreover, our proofs in Section \ref{separ} simplify if we consider certain surreals
which turn out to be longer than necessary, sometimes
strictly longer than any other surreal involved.
See Theorems \ref{ult} and \ref{thmthm}.
Without using  this technique, we would be forced 
to resort to the usual divisions by cases.
See Definition \ref{sep} and Theorem \ref{boh}.
The above observation suggests that the technique of 
using prolonged surreals
might have further applications.

\section{A canonical bounding element} \labbel{bound}

We are now going to prove
Theorem \ref{1}. Actually, we shall give an explicit description
of an element $s$ whose existence follows from \ref{1}
and then derive some useful properties of such an $s$.
 
If $u$ is a surreal, let $u \conc +$ be obtained 
by adding a $+$ at the top of  the string $u$.
Formally, if $\ell( u) = \alpha $, 
then $u \conc +$ is the surreal $s$ defined by:  
   $\ell( s ) = \alpha +1 $,
$ s( \alpha ) = +$    
and 
$s ( \gamma ) = u( \gamma )$,
for $\gamma < \alpha $.
The surreal  $u \conc -$ is defined similarly, 
by adding a $-$ at the top.

\begin{definition} \labbel{supast}
For every set $S$ of surreals, 
define $\sup ^* S$  in the following way. 

If $S$ has some maximum
$ u$, let $\sup ^* S = u \conc + $. 

If $S$ has no maximum, 
let $\sup ^* S$ be the surreal $s$ 
defined as follows. 
If $\gamma$ is an ordinal,
let $s(\gamma)$ be defined if and only if 
there is $u \in S$ such that 
$u(\gamma)$ is defined and, for every $u' \geq u$ such that  $u' \in S$,
it happens that  $ u ' _{ \restriction \leq \gamma }
= u _{ \restriction \leq \gamma }  $.
If this is the case, let 
$s( \gamma ) = u ( \gamma )$.
Of course, by the assumption, the definition 
does not depend on the choice of $u$.
Notice also that if $s( \gamma )$ is defined,
then $s(\gamma')$ is defined, for every $\gamma' \leq \gamma $,
hence the definition  actually provides a surreal.

The surreal 
$\inf ^* S$  is defined in the symmetrical way, namely,
if $S$ has some minimum
$ u$, let  
$\inf ^* S = u \conc - $. 
If $S$ has no minimum,  let $\inf ^* S$
be the surreal $t$ such that  
 $t(\gamma)$ is defined if and only if 
there is $v \in T$ such that $v( \gamma )$ is defined and,
for every $v' \leq v$, $v' \in T$,
it happens that  $ v ' _{ \restriction \leq \gamma }
= v _{ \restriction \leq \gamma }  $, and,
if this is the case, then 
let $t( \gamma ) = v ( \gamma )$.
  \end{definition}   
 
An alternative definition of 
$\sup ^* S $ and $ \inf T^*$
can be given by using the surreal limit introduced in 
 Mez\H{o} \cite{M}  
for sequences of length $ \omega$ and in \cite{L} 
for sequences indexed by an arbitrary 
linearly ordered set.
If  $S$ has no maximum  and the elements of $S$ are ordered 
as a strictly increasing sequence
 $( s_i) _{i \in I} $, then  
 $\sup ^* S= \slim _{i \in I}  s_i$ in the notation of 
\cite{L}. 

Notice that Definition \ref{supast} makes sense also in case
$S$ is the empty set. In this case, $\sup ^* S$ is the empty sequence, the 
surreal of length $0$.

\begin{proposition} \labbel{s}
Suppose that $S$ is a set of surreals,  
 $s= \sup ^* S$ and 
$\ell(s) = \alpha $. Then 
$u _{ \restriction  \alpha } < s  $,
for every $u \in S$.  
In particular, $S < s$.
 \end{proposition}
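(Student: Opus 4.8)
The plan is to reduce the ``in particular'' clause to the main inequality, and then prove the latter by following the two clauses of Definition \ref{supast}. Indeed, once we know $u _{ \restriction \alpha } < s$ for every $u \in S$, then, since $s _{ \restriction \alpha } = s$ because $\ell(s) = \alpha$, Lemma \ref{lem}(3) applied with $\gamma = \alpha$ gives $u < s$ at once, that is, $S < s$. So everything reduces to proving $u _{ \restriction \alpha } < s$, and I would split according to whether $S$ has a maximum.

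If $S$ has a maximum $u_0$, the argument is immediate: here $s = u_0 \conc +$, so $\ell(u_0) < \alpha$ and $(u_0) _{ \restriction \alpha } = u_0$. For any $u \in S$ we have $u \leq u_0$, hence $u _{ \restriction \alpha } \leq (u_0) _{ \restriction \alpha } = u_0$ by the monotonicity of Lemma \ref{lem}(2); and $u_0 < u_0 \conc + = s$ directly from clause (ii) of the definition of the order. Combining, $u _{ \restriction \alpha } \leq u_0 < s$.

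The substance is the case where $S$ has no maximum. Fix $u \in S$ and let $\gamma$ be the least ordinal at which $u$ and $s$ differ. Suppose first $\gamma < \alpha$, so that $s(\gamma)$ is defined; let $w \in S$ witness the definedness of $s(\gamma)$ as in Definition \ref{supast}, so that $w$ agrees with $s$ on $[0,\gamma]$ and every element of $S$ above $w$ agrees with $w$ on $[0,\gamma]$. Since $u$ agrees with $s$, hence with $w$, below $\gamma$ but $u(\gamma) \neq s(\gamma) = w(\gamma)$, the surreals $u$ and $w$ first differ exactly at $\gamma$; were $u \geq w$, the stability of $w$ would force $u(\gamma) = w(\gamma)$, a contradiction, so $u < w$ and therefore $u(\gamma) < w(\gamma) = s(\gamma)$ in the extended order. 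As $u$ and $s$ agree below $\gamma$ and $\gamma < \alpha = \ell(s)$, this is exactly what is needed to deduce $u _{ \restriction \alpha } < s$ from the definition of the order (clause (i) or (ii), according as $u(\gamma)$ is defined or not).

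The main obstacle, where I expect the work to concentrate, is to rule out the remaining possibility that $u$ agrees with $s$ on all of $[0,\alpha)$ — for then $u _{ \restriction \alpha } = s$ and strictness would fail. I would first note that the argument of the previous paragraph, applied to any $u' \in S$ with $u' > u$, forces every such $u'$ to agree with $s$ on $[0,\alpha)$ as well: otherwise $u'$ would first differ from $u$ at some $\beta < \alpha$, and clashing $u'$ with a witness $w$ for $s(\beta)$ yields a contradiction just as above. Thus $A = \{ u' \in S \mid u' \geq u \}$ is determined entirely by coordinate $\alpha$ and beyond, and $A$ has no maximum (a maximum of $A$ would be a maximum of $S$). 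Inspecting the value at $\alpha$ across $A$ then contradicts $\ell(s) = \alpha$: if some $v \in A$ has $v(\alpha) = +$, then, all members of $A$ agreeing below $\alpha$, any element above $v$ is automatically stable at $\alpha$, so $v$ witnesses that $s(\alpha)$ is defined; if no element of $A$ takes the value $+$ at $\alpha$, then either $s$ itself lies in $A$ and is its maximum, or every element of $A$ has value $-$ at $\alpha$ and $u$ itself witnesses $s(\alpha) = -$. In each branch we contradict either the definedness clause underlying $\ell(s) = \alpha$ or the absence of a maximum. Hence the equality case cannot occur, and $u _{ \restriction \alpha } < s$ holds in all cases.
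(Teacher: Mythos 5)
Your proof is correct, and it reaches the same two milestones as the paper (the maximum case is handled identically, and the ``in particular'' clause is reduced to the main inequality via Lemma \ref{lem}(3)), but in the no-maximum case you take a genuinely different route. The paper first proves the weak inequality $u _{ \restriction \alpha } \leq s$ for all $u \in S$ by checking $u _{ \restriction \leq \gamma } \leq s _{ \restriction \leq \gamma }$ for each $\gamma < \alpha$ and invoking the ``limit'' Lemma \ref{lem}(4); it then rules out equality by noting that $w _{ \restriction \alpha }$ is constant for $w \geq u$ and that the value $w(\alpha)$ must stabilize over the three possible values, so the stable value would be recorded as $s(\alpha)$, contradicting $\ell(s)=\alpha$ or the absence of a maximum. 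You instead bypass Lemma \ref{lem}(4) entirely: you look at the first coordinate $\gamma$ where $u$ and $s$ disagree, play $u$ against the witness $w$ for the definedness of $s(\gamma)$ from Definition \ref{supast} to force $u < w$ and hence $u(\gamma) < s(\gamma)$, and then handle the residual case $u _{ \restriction \alpha } = s$ by an explicit three-way analysis of the values at coordinate $\alpha$ across the upward-closed set $A = \{u' \in S \mid u' \geq u\}$. Your treatment of that residual case is essentially an unpacking of the paper's stabilization argument (the monotonicity of $w(\alpha)$ in $w$ over the chain $- < \textit{undefined} < +$ is exactly what makes your three branches exhaustive), so the mathematical content coincides there; what your version buys is independence from Lemma \ref{lem}(4) and a fully explicit account of why $s(\alpha)$ would have to be defined, at the cost of a longer case analysis, while the paper's version is shorter because it delegates the limit step to the lemma.
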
  

\begin{proof}
The last statement is immediate from the previous one, by Lemma \ref{lem}(3),
since  $ s = s _{ \restriction \alpha  }  $, by assumption.

If $S$ has a maximum $w$, then $w _{ \restriction \alpha } = w <s  $, by construction.
If $u \in S$, then $u\leq w $, hence,
by Lemma \ref{lem}(2),  $ u _{ \restriction \alpha } \leq w _{ \restriction \alpha }  < s $.     

Suppose that $S$ has no maximum. First we show the following
statement.

(*) $ u _{ \restriction  \alpha } \leq s  $, for every $u \in S$. 

Indeed, for every  $\gamma < \alpha $,   we have
$ w _{ \restriction \leq \gamma }  = s _{ \restriction \leq \gamma }   $,
for all sufficiently large $w \in S$,
by the definition of $s= \sup ^* S$. 
 In particular,  
we can choose    $w \geq u$, hence 
 $ u _{ \restriction \leq \gamma }  \leq 
 w _{ \restriction \leq \gamma }  =  s _{ \restriction \leq \gamma }   $.
Since this holds for every $\gamma < \alpha $, we have
$ u _{ \restriction \alpha } \leq s $,
by Lemma \ref{lem}(4) with
$u _{ \restriction \alpha }  $ in place of $u$
and since $(u _{ \restriction \alpha }) _{ \restriction \leq \gamma } 
 = u  _{ \restriction \leq \gamma }  $, if $\gamma< \alpha $. 
 
Having proved (*), suppose by contradiction that 
$ u _{ \restriction \alpha } =s  $, for some $u \in S$.
If $w \in S$ and  $w \geq u$, then  
$ s \geq w _{ \restriction  \alpha }  \geq 
u _{ \restriction \alpha  }  =s $,
by (*) with  $w$
in place of $u$  and by Lemma \ref{lem}(2), hence
 $ s = w _{ \restriction  \alpha }  = 
u _{ \restriction  \alpha  }  $.
Now $w ( \alpha )$ can assume only a finite number of values
($+$, $-$ and  possibly $undefined$).
Since $w _{ \restriction \alpha }  $ is constant,
for $w \geq u$, 
then, for $w$ sufficiently large, the value of    
$w ( \alpha )$  stabilizes, as $w$ increases.
But then, by the definition of $s= \sup ^* S$, 
 $w ( \alpha )$ should  stabilize to
the value of $s( \alpha )$, which is undefined,
by the assumption $\ell (s) = \alpha $. 
 Then  $s= w$, for some sufficiently large
$w \in S$,  hence such a $w$  would be the maximum of $S$,
contrary to the assumption that $S$ has no maximum.
 We have reached a contradiction, hence 
$u _{ \restriction  \alpha } < s  $.
\end{proof}

\begin{theorem} \labbel{1b}
Suppose that  $S$ is a set of surreals,  
 $s= \sup ^* S$ and 
$\ell(s) = \alpha $.
Then, for every surreal
$z$, 
  \begin{enumerate}
    \item[]   
$S < z$ if and only if  
$s  \leq  z _{ \restriction  \alpha }   $.
\end{enumerate}  
 \end{theorem}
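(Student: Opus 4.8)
The plan is to prove the two implications of the biconditional separately, observing at the outset that the reverse implication is completely uniform, whereas the forward implication is what forces the split into the two cases of Definition \ref{supast} (the case where $S$ has a maximum and the case where it does not). For the easy direction, assume $s \leq z_{\restriction \alpha}$ and fix an arbitrary $u \in S$. Proposition \ref{s} already supplies $u_{\restriction \alpha} < s$, so chaining the two inequalities gives $u_{\restriction \alpha} < z_{\restriction \alpha}$, and Lemma \ref{lem}(3), applied with $\gamma = \alpha$, upgrades this at once to $u < z$. As $u$ was arbitrary, $S < z$. This step needs no case distinction and is essentially a one-line consequence of work already done.

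For the forward direction, assume $S < z$; the goal is $s \leq z_{\restriction \alpha}$. The cleanest route is through Lemma \ref{lem}(4), applied with $s$ in the role of ``$u$'' and $z_{\restriction \alpha}$ in the role of ``$s$'' (both have length $\leq \alpha$): it reduces the claim to checking $s_{\restriction \leq \gamma} \leq (z_{\restriction \alpha})_{\restriction \leq \gamma} = z_{\restriction \leq \gamma}$ for every $\gamma < \alpha$. When $S$ has \emph{no} maximum this is immediate. For each $\gamma < \alpha = \ell(s)$ the coordinate $s(\gamma)$ is defined, so by the definition of $s = \sup^* S$ (exactly the stabilization already used in the proof of Proposition \ref{s}) there is a sufficiently large $u \in S$ with $s_{\restriction \leq \gamma} = u_{\restriction \leq \gamma}$; then $u < z$ and Lemma \ref{lem}(2) give $u_{\restriction \leq \gamma} \leq z_{\restriction \leq \gamma}$, whence $s_{\restriction \leq \gamma} \leq z_{\restriction \leq \gamma}$. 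Feeding this into Lemma \ref{lem}(4) delivers $s \leq z_{\restriction \alpha}$.

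The hard part will be the maximum case, $s = w \conc +$ with $w = \max S$ and $\alpha = \ell(w) + 1$. Here the Lemma \ref{lem}(4) reduction breaks down precisely at the top coordinate $\gamma = \ell(w)$: no $u \in S$ satisfies $u_{\restriction \leq \ell(w)} = w \conc +$ (every $u \leq w$), so the generic argument becomes circular at that last index, and the value $s(\ell(w)) = +$ has been \emph{added freely} rather than read off from $S$. I would therefore compare $s$ and $z_{\restriction \alpha}$ directly, using $w < z$ by hand. From $w < z$ and Lemma \ref{lem}(2) we get the dichotomy $w = w_{\restriction \ell(w)} \leq z_{\restriction \ell(w)}$. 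If the inequality is strict, then $s_{\restriction \ell(w)} = w < z_{\restriction \ell(w)} = (z_{\restriction \alpha})_{\restriction \ell(w)}$, and Lemma \ref{lem}(3) yields $s < z_{\restriction \alpha}$. If instead $z_{\restriction \ell(w)} = w$, then $\ell(z) \geq \ell(w)$; the possibility $\ell(z) = \ell(w)$ would force $z = w$, contradicting $w < z$, so $\ell(z) > \ell(w)$, and now $w < z$ forces $z(\ell(w)) = +$, whence $z_{\restriction \alpha} = w \conc + = s$. In either subcase $s \leq z_{\restriction \alpha}$, completing the forward implication. The only genuine subtlety, then, is isolating this single extra coordinate in the maximum case and pinning down its value from $w < z$ rather than from the definition of $\sup^*$.
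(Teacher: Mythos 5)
Your proof is correct and follows essentially the same route as the paper: the reverse implication via Proposition \ref{s} and Lemma \ref{lem}(3), the no-maximum case via stabilization and Lemma \ref{lem}(4), and a separate direct analysis of the maximum case. Your handling of the maximum case (splitting on whether $w \leq z_{\restriction \ell(w)}$ is strict) is just a minor repackaging of the paper's dichotomy on the least ordinal where $w$ and $z$ differ, and both land on the same two conclusions $s < z_{\restriction\alpha}$ or $s = z_{\restriction\alpha}$.
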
 

\begin{proof}
Suppose that $S < z$
and  $S$ has a maximum.
Say, $u$ is the maximum of $S$.  Then
$u < z$. Let $\gamma$ be smallest ordinal such that 
$u( \gamma )  < z ( \gamma )$, hence $\gamma < \alpha $,
by the second statement in Lemma \ref{lem}(1), since $\ell(u) < \alpha $.
Indeed, if $\ell(u)= \beta $, then Definition \ref{supast} 
gives $\alpha= \ell (s) = \beta +1$.   
If $\gamma < \beta $, then, by the definition 
of $s= \sup ^* S$, we have 
$ s( \gamma ) =  u( \gamma )  < z ( \gamma )$, hence
 $s = s _{ \restriction \alpha }   <  z_{ \restriction  \alpha }   $, by 
Lemma \ref{lem}(3).
If $\gamma  = \beta  $, then
$ u _{ \restriction \gamma } = z _{ \restriction \gamma }    $,
by Lemma \ref{lem}(1); moreover,   
$u(\gamma)$ is undefined, hence 
$z(\gamma) = +$, since $u( \gamma )  < z ( \gamma )$.
Thus 
$ z _{ \restriction \alpha } = s  $. 
In any case, $s  \leq z_{ \restriction  \alpha }   $.

Suppose that $S < z$
and $S$ has no maximum.
By the definition of $ s = \sup^* S$, we have that,
for every $\gamma < \alpha $,
there is $u \in S$
such that 
$u _{ \restriction \leq \gamma } = s _{ \restriction \leq  \gamma }    $.
Since $u < z$, we get 
$s _{ \restriction \leq \gamma } = u _{ \restriction \leq  \gamma }   
\leq z _{ \restriction \leq \gamma }   $, by Lemma \ref{lem}(2).
Since this holds for every $\gamma < \alpha $,
we get 
 $ s= s _{ \restriction \alpha  }    
\leq z _{ \restriction \alpha }   $,
by Lemma \ref{lem}(4)
with $s$ in place of $u$ and $z _{ \restriction \alpha }  $ 
in place of $s$.    

Conversely, suppose that $s  \leq  z _{ \restriction  \alpha }   $.
By Proposition \ref{s}, 
$u _{ \restriction \alpha } < s = s _{ \restriction \alpha } 
\leq  z _{ \restriction  \alpha }     $, for every $u \in S$.
But then $u < z$, by Lemma \ref{lem}(3).     
 \end{proof}

Definition \ref{supast}
and Theorem \ref{1b} not only
furnish a useful ``bounding
element'' relative to some set $S$ of surreals.
They also provide     
significant information about the possible lengths of
 separating elements of some pair of sets of sureals.

\begin{corollary} \labbel{corlen}
Suppose that $S$, $T$ are sets of surreals and  $S < T$.
Let $ s= \sup ^* S $, $ t=  \inf ^* T$,
$\alpha =\ell(s)$ and 
$ \beta  =\ell(t)$. 
If  $\varepsilon \geq 
 \max \{ \alpha, \beta   \}  $
and $w$ is any surreal number, then
$w$ separates $S$ and $T$ 
if and only if   $w _{ \restriction \varepsilon }  $ separates $S$ and $T$.
 \end{corollary}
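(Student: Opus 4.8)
The plan is to reduce the corollary to Theorem \ref{1b} together with its symmetric version for $T$ and $\inf^*$. The key observation is that separating $S$ and $T$ is exactly the conjunction of two conditions, each of which, by Theorem \ref{1b}, depends on $w$ only through a short initial segment of $w$. First I would record the symmetric form of Theorem \ref{1b}: with $t=\inf^*T$ and $\beta=\ell(t)$, for every surreal $z$ we have $z<T$ if and only if $z_{\restriction\beta}\leq t$. This is proved by the same argument as Theorem \ref{1b}, reading the order and the roles of maximum/minimum in the mirror direction. Assuming these two equivalences, I would then write, for any surreal $w$,
\begin{equation*}
S<w<T \iff \bigl(s\leq w_{\restriction\alpha}\bigr)\ \text{and}\ \bigl(w_{\restriction\beta}\leq t\bigr).
\end{equation*}

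Next I would exploit the fact that restriction to a longer ordinal does not disturb restriction to a shorter one: if $\varepsilon\geq\max\{\alpha,\beta\}$, then $(w_{\restriction\varepsilon})_{\restriction\alpha}=w_{\restriction\alpha}$ and $(w_{\restriction\varepsilon})_{\restriction\beta}=w_{\restriction\beta}$. This is immediate from the definition of $\gamma$-initial segment, since for $\delta<\alpha\leq\varepsilon$ both restrictions agree with $w(\delta)$, and likewise for $\delta<\beta$. Consequently the two defining conditions $s\leq w_{\restriction\alpha}$ and $w_{\restriction\beta}\leq t$ are literally unchanged when $w$ is replaced by $w_{\restriction\varepsilon}$: we have $s\leq w_{\restriction\alpha}=(w_{\restriction\varepsilon})_{\restriction\alpha}$ and $w_{\restriction\beta}=(w_{\restriction\varepsilon})_{\restriction\beta}\leq t$. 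Applying the displayed equivalence to both $w$ and $w_{\restriction\varepsilon}$ then yields that $w$ separates $S$ and $T$ if and only if $w_{\restriction\varepsilon}$ does.

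I do not expect a genuine obstacle here, only two points that require care. The first is that Theorem \ref{1b} is stated for $\sup^*S$ but I am applying its mirror image to $T$; I would either state and prove the dual explicitly or note that the whole construction is symmetric under reversing the order and swapping $+\leftrightarrow-$, $\sup^*\leftrightarrow\inf^*$, $<\leftrightarrow>$. The second, and the only place where the hypothesis $S<T$ might seem to be needed, is to guarantee that a separating element exists at all; but the corollary is a biconditional about whether a \emph{given} $w$ separates, so the argument above goes through for every $w$ regardless, and $S<T$ is used only implicitly through the well-definedness of $s$ and $t$ as supplied by Definition \ref{supast}. The cleanest route is therefore to phrase everything through the two initial-segment conditions and let the idempotence-type identity $(w_{\restriction\varepsilon})_{\restriction\alpha}=w_{\restriction\alpha}$ (for $\alpha\leq\varepsilon$) carry the proof; no division into cases is required.
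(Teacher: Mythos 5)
Your proposal is correct and follows exactly the paper's own argument: the paper also proves the corollary by combining Theorem \ref{1b} with its symmetric version and invoking the identity $( w _{ \restriction \varepsilon }) _{ \restriction \alpha } = w _{ \restriction \alpha }$ for $\varepsilon \geq \alpha$ (and likewise for $\beta$). Your write-up merely makes explicit the conjunction of the two initial-segment conditions, which the paper leaves implicit in the word ``immediate.''
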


 \begin{proof} 
Immediate from Theorem \ref{1b}
and its symmetrical version.
Indeed, from, say, 
$\varepsilon \geq \alpha $,
it follows 
$( w _{ \restriction \varepsilon }) _{ \restriction \alpha } =
w _{ \restriction \alpha }      $.   
\end{proof}

\section{A separating element} \labbel{separ}

We now have at our disposal all the tools
necessary in order to  prove Theorem \ref{2}
and its improvements. 
However, we shall first discuss an example.
One might expect that 
if $S < T$,
$s=\sup ^* S $ and $ t= \inf ^* T$,
then $s<t$, or at least $s \leq t$.
Were this the case, then 
$u < s$, for every $u \in S$,
by the last statement in Proposition \ref{s}.     
Symmetrically, 
$t < v$, for every $v \in T$,  
hence Theorem \ref{2} 
 would follow, since
then both $s$ and  $t$
(in fact, any intermediate surreal) would separate $S$ and $T$. 
However, it is not necessarily the case
that $S < T$ implies
$\sup ^* S <  \inf ^* T$.
Actually, it might happen that
$\sup ^* S >  \inf ^* T$.

Indeed, for $ i < \omega$, let
$s_i = ++ \stackrel{i} {\dots}+$ be a sequence consisting
of $i$ pluses,  
and let $S = \{ s_i \mid i \in \omega  \} $, thus 
$s= \sup ^* S =  +++ \stackrel{\omega} {\dots}$ \ 
is a sequence of $ \omega$ pluses.
For $ i < \omega$, let
$t_i = ++ \stackrel{ \omega  } {\dots} \ -- \stackrel{i} {\dots}-$ be  
a sequence of $ \omega$ pluses followed by $i$ minuses. 
If $T = \{ t_i \mid i \in \omega  \} $, then
$t = \inf ^* T =  ++ \stackrel{ \omega  } {\dots} \ 
-- \stackrel{ \omega   } {\dots}$ is
the sequence consisting of $ \omega$ pluses followed 
by $ \omega$ minuses.  
 The above example shows that it might happen that
$S < T$ and,
nevertheless, 
$\sup ^* S >\inf ^* T$. 
However, by using Theorem \ref{1b},
we can easily show that there exists some element 
separating $S$ and $T$, actually, 
either $s$ or  $t$ works (see Corollary \ref{corult} below).
But it is not necessarily the case that \emph{both}
$s$ and  $t$ work.  
In the above example, $t$ separates $S$ and $T$,
but $s$ does not separate $S$ and $T$. To the opposite! We have 
$t_i < s$, for every nonzero $ i < \omega$,
rather than $s < t_i$.

However, we can show that some prolongment
of $s$ (and, symmetrically, some prolongment of $t$)
does separate $S$ and $T$.

\begin{theorem} \labbel{ult}
If $S$ and $T$ are sets of surreals such that  $S < T$,
then some prolongment of  $\sup^* S$ separates $S$ and $T$.     
 \end{theorem}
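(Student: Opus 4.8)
The plan is to exploit Theorem \ref{1b} in order to reduce the two‑sided requirement $S < w < T$ to a \emph{one‑sided} problem. Write $s = \sup^* S$ and $\alpha = \ell(s)$. If $w$ is any prolongment of $s$, then $w_{\restriction \alpha} = s$, so trivially $s \le w_{\restriction \alpha}$, and Theorem \ref{1b} immediately gives $S < w$. Thus the condition $S < w$ costs nothing: \emph{every} prolongment of $s$ already bounds $S$ from above. All the work is therefore concentrated on the right‑hand inequality, and the whole theorem reduces to producing a single prolongment $\hat{\hat s}$ of $s$ with $\hat{\hat s} < v$ for every $v \in T$.

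To build such a prolongment I would prolong $s$ by minuses and---this is the key trick announced in the introduction---make it \emph{longer than every element of $T$}. Since $T$ is a set, $\{\ell(v) \mid v \in T\}$ is a set of ordinals, so I may fix an ordinal $\lambda$ with $\lambda > \alpha$ and $\lambda > \ell(v)$ for all $v \in T$. Define $\hat{\hat s}$ to be the surreal of length $\lambda$ with $\hat{\hat s}(\gamma) = s(\gamma)$ for $\gamma < \alpha$ and $\hat{\hat s}(\gamma) = -$ for $\alpha \le \gamma < \lambda$. This definition is uniform, involving no division into cases. The only global fact about $T$ that I feed in is Theorem \ref{1b} applied to each $v$: since $S < T$ we have $S < v$, hence $s \le v_{\restriction \alpha}$ for every $v \in T$.

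It then remains to check $\hat{\hat s} < v$ for each fixed $v \in T$, and here the inequality $s \le v_{\restriction\alpha}$ splits according to whether it is strict. If $s < v_{\restriction \alpha}$, then, as $\hat{\hat s}_{\restriction\alpha} = s$, Lemma \ref{lem}(3) directly yields $\hat{\hat s} < v$. If instead $s = v_{\restriction \alpha}$, then $\hat{\hat s}$ and $v$ agree on $[0,\alpha)$ while $\hat{\hat s}$ is constantly $-$ from $\alpha$ on; comparing them, either $v$ takes the value $+$ at some position in $[\alpha,\ell(v))$, in which case at the first such position $\hat{\hat s}$ reads $-$ and $v$ reads $+$, giving $\hat{\hat s} < v$, or $v$ is $-$ wherever it is defined on $[\alpha,\ell(v))$, in which case $\hat{\hat s}$ and $v$ agree throughout $[0,\ell(v))$ and then $\hat{\hat s} < v$ follows from clause (iii) in the definition of $<$, since $\ell(\hat{\hat s}) = \lambda > \ell(v)$ and $\hat{\hat s}(\ell(v)) = -$. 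The main obstacle is precisely this last point: obtaining a \emph{strict} inequality when $v$ is itself a minus‑extension of $s$. Without padding $\hat{\hat s}$ beyond the lengths of all $v \in T$ one could have $\hat{\hat s} = v$, and one would be forced back into the usual case analysis; choosing $\lambda$ large removes this difficulty in one stroke. Collecting the two inequalities $S < \hat{\hat s}$ and $\hat{\hat s} < T$ then finishes the proof.
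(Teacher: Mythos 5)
Your proposal is correct and follows essentially the same route as the paper: prolong $\sup^* S$ by minuses to a length exceeding $\ell(v)$ for every $v\in T$, get $S<\hat{\hat s}$ for free from Theorem \ref{1b} since $\hat{\hat s}_{\restriction\alpha}=s$, and verify $\hat{\hat s}<v$ from $s\leq v_{\restriction\alpha}$. The only cosmetic difference is that you split on whether $s\leq v_{\restriction\alpha}$ is strict, whereas the paper splits on whether the first position where $\hat{\hat s}$ and $v$ differ lies below or above $\alpha$; both verifications are sound.
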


 \begin{proof}
Let $s= \sup^* S$ and $\alpha = \ell(s)$. 
By assumption, 
$S < v$, for every $v \in T$,
hence   
$s \leq v _{ \restriction  \alpha }  $,
for every  $v \in T$, by Theorem \ref{1b}. 
Choose some ordinal $\eta$ such that 
$\eta > \ell (v)$, for every $v \in T$ 
and $\eta \geq \alpha = \ell (s)$.
Let   $ \hat{ \hat{ s }}$ be the prolongment of $s$
of length $\eta$  
obtained by adding only minuses.

Let $v \in T$. 
We have 
$ \hat{ \hat{ s }} \neq v$,
since by construction they have different length.  
If $\gamma$ is the first ordinal such that 
$ \hat{ \hat{ s }} ( \gamma )  \neq v ( \gamma )$,
then
$ \hat{ \hat{ s }} ( \gamma )  < v ( \gamma )$; this is 
trivial by construction if $\gamma \geq \alpha $
and follows from 
$s \leq v _{ \restriction  \alpha }  $
if $\gamma < \alpha $.
Hence 
$ \hat{ \hat{ s }} <  v $.  

Since, by construction,
$ \hat{ \hat{ s }} _{ \restriction \alpha } = s  $,
we have 
$u < \hat{ \hat{ s }} $, for every $u \in S$,
 by Theorem \ref{1b}.
Hence 
$ \hat{ \hat{ s }}$ separates $S$ and $T$.   
 \end{proof}

Using  Corollary \ref{corlen}, we can dramatically
cut down the length either of 
the element constructed in the proof 
of \ref{ult}, or of the element constructed in the symmetrical way.

\begin{corollary} \labbel{corult}
Suppose that $S$ and $T$ are sets of surreals and $S < T$.
If $s= \sup^* S$ and $t= \inf^* T$,
then either $s$ or $t$ separates $S$ and $T$.     

In fact, if $\ell(s)= \ell (t)$
then both $s$ and  $t$
separate $S$ and $T$.
If $\ell(s) \neq \ell (t)$, then the longer
one between $s$ and $t$
is a separating element.
 \end{corollary}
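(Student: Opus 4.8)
The plan is to combine the existence result of Theorem \ref{ult} with the length-reduction principle of Corollary \ref{corlen}, avoiding any fresh case analysis beyond the unavoidable comparison of $\ell(s)$ and $\ell(t)$. Write $\alpha = \ell(s)$ and $\beta = \ell(t)$. First I would treat the case $\alpha \geq \beta$. By Theorem \ref{ult} there is a prolongment $\hat{\hat{s}}$ of $s = \sup^* S$ that separates $S$ and $T$. Since $\hat{\hat{s}}$ prolongs $s$ and $\ell(s) = \alpha$, restricting to $\alpha$ recovers $s$ exactly, that is, $\hat{\hat{s}}_{\restriction \alpha} = s$; this is immediate from the definition of an initial segment.

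Next I would apply Corollary \ref{corlen} with $\varepsilon = \alpha$. The hypothesis $\varepsilon \geq \max\{\alpha, \beta\}$ is exactly the assumption $\alpha \geq \beta$ of this case, so the corollary applies to $w = \hat{\hat{s}}$ and yields that $\hat{\hat{s}}$ separates $S$ and $T$ if and only if $\hat{\hat{s}}_{\restriction \alpha}$ does. Since $\hat{\hat{s}}$ separates, so does $\hat{\hat{s}}_{\restriction \alpha} = s$. Thus in this case the longer (or equally long) surreal $s$ separates $S$ and $T$.

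The case $\beta \geq \alpha$ is handled symmetrically: invoking the symmetric form of Theorem \ref{ult} for $\inf^* T$, I obtain a prolongment $\hat{\hat{t}}$ of $t$ that separates, note that $\hat{\hat{t}}_{\restriction \beta} = t$, and apply Corollary \ref{corlen} with $\varepsilon = \beta \geq \max\{\alpha,\beta\}$ to conclude that $t$ separates. Since at least one of $\alpha \geq \beta$ or $\beta \geq \alpha$ always holds, either $s$ or $t$ is a separating element, and in fact the longer one is. When $\alpha = \beta$ both inequalities hold, so both arguments apply and $s$ and $t$ both separate.

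I do not expect a genuine obstacle here: the entire content is the observation that cutting the long prolongment $\hat{\hat{s}}$ back down to length $\alpha$ returns $s$ itself, together with the fact that Corollary \ref{corlen} permits this cut precisely because $\alpha = \max\{\alpha,\beta\}$ in the relevant case. The only point demanding a little care is the bookkeeping of the length inequality: one must restrict to $\max\{\alpha,\beta\}$ rather than to the possibly smaller length of the shorter surreal, which is exactly why the \emph{longer} of $s$ and $t$ is the one that survives the restriction.
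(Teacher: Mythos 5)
Your argument is correct and is essentially the paper's own proof: in the case $\ell(s)\geq\ell(t)$ you take the separating prolongment $\hat{\hat{s}}$ from Theorem \ref{ult}, observe $\hat{\hat{s}}_{\restriction \alpha}=s$, and cut down via Corollary \ref{corlen} with $\varepsilon=\alpha$, handling the other case symmetrically. No differences worth noting.
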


\begin{proof}
Suppose that $ \alpha = \ell(s) \geq \ell (t) = \beta $.
The prolongment $ \hat{ \hat{ s }}$ 
of $ s $ constructed 
in the proof of Theorem \ref{ult}
separates  $S$ and $T$.
By Corollary \ref{corlen},
 $ \hat{ \hat{ s }} _{ \restriction \alpha }  $
separates  $S$ and $T$, since, by assumption,
$\alpha \geq \beta $.
By construction, $ \hat{ \hat{ s }} _{ \restriction \alpha }  $ is $s$.

Symmetrically, 
if $\ell(t) \geq \ell (s) $,
then $t$ separates $S$ and $T$.  
\end{proof}

It is evident from Corollary \ref{corult} that the separating element 
given by the proof of Theorem \ref{ult}
might be longer, in general, than the shortest possible
separating element. 
As another example, if all the elements of $S$ are negative surreals
and all the elements of $T$ are positive surreals, 
then $0$ is a separating element (hence it is the shortest one!),
 but the proof of \ref{ult} furnishes a surreal longer 
than every member of $T$---in principle, 
a surreal as long as any prescribed ordinal.

Hence it is quite surprising
to discover that if we combine the proof of Theorem \ref{ult}
with the symmetric argument, we are quickly
led to the shortest separating surreal.
Actually, as we mentioned, the construction proceeds plainly
without
divisions into cases. To the contrary,
if  we work out an explicit
description of the shortest separating element,
then four cases emerge.
See Definition \ref{sep}
and Theorem \ref{boh}.

\begin{theorem} \labbel{thmthm} 
Suppose that $S$ and $T$ are sets of surreals and  $S < T$.
Let $ s= \sup ^* S $, $ t=  \inf ^* T$,
$\alpha =\ell(s)$,
$ \beta  =\ell(t)$ 
and  $\varepsilon =
 \max \{ \alpha, \beta   \} $. Let
 $ \hat{ s }$ be the  prolongment  of $s$ to 
length $\varepsilon+1$ obtained by 
  adding only minuses and,
symmetrically, let 
$ \hat{ t }$ be the  prolongment  of $t$ to 
length $\varepsilon +1$ obtained by 
  adding only
pluses.

Then  both $ \hat{ s }$ and 
$ \hat{ t }$ separate $S$ and  $T$.
Moreover,
$ \hat{ s } < \hat{ t }$.

If $\gamma$ is the shortest ordinal
such that $ \hat{ s }( \gamma ) \neq  \hat{ t }( \gamma )$,
then $ \hat{ s } _{ \restriction  \gamma }  =  \hat{ t } _{ \restriction \gamma }  $
is the shortest surreal separating $S$ and $T$.
Any other separating surreal
 is a prolongment of  $\hat{ s } _{ \restriction  \gamma }$.
\end{theorem}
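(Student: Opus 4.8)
The plan is to assemble the conclusion from pieces already proved, treating the four assertions in turn. First I would establish that both $\hat{s}$ and $\hat{t}$ separate $S$ and $T$. For $\hat{s}$ this is immediate from Theorem \ref{ult} together with Corollary \ref{corlen}: the prolongment $\hat{\hat{s}}$ constructed in the proof of \ref{ult} separates $S$ and $T$, and since $\hat{s}$ is a restriction of that element to length $\varepsilon+1 \geq \max\{\alpha,\beta\}$, Corollary \ref{corlen} gives that $\hat{s}=\hat{\hat{s}}_{\restriction (\varepsilon+1)}$ also separates. One checks directly that $\hat{s}_{\restriction \alpha}=s$ and $\hat{s}_{\restriction \beta}=t_{?}$-type restrictions behave as needed; more cleanly, I would invoke Theorem \ref{1b} (for $S<\hat{s}$, using $\hat{s}_{\restriction\alpha}=s$) and its symmetric version (for $\hat{s}<T$, using $s\leq v_{\restriction\alpha}$ and the trailing minuses). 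The statement for $\hat{t}$ is the mirror image.

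Next I would prove the key inequality $\hat{s}<\hat{t}$. Since $\hat{s}$ separates, $\hat{s}<v$ for every $v\in T$; since $\hat{t}$ separates, $u<\hat{t}$ for every $u\in S$. These do not directly compare $\hat{s}$ and $\hat{t}$, so the argument must examine the two strings at their first disagreement. Both have length $\varepsilon+1$. I would argue that at the least ordinal $\gamma$ where they differ, $\hat{s}(\gamma)=-$ while $\hat{t}(\gamma)=+$, so that $\hat{s}<\hat{t}$ follows from the definition of the order. The point is that past length $\alpha$ (respectively $\beta$) the string $\hat{s}$ (respectively $\hat{t}$) is constantly $-$ (respectively $+$); combining this with the separation properties forces the sign pattern $-$ versus $+$ at the first divergence rather than the reverse. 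I expect this comparison to be the main obstacle, since it is the one genuinely new computation: one must rule out that $\hat{s}$ and $\hat{t}$ diverge with $\hat{s}$ carrying the $+$, and this uses that $s\leq v_{\restriction\alpha}$ and $u_{\restriction\beta}\leq t$ for the relevant members, via Theorem \ref{1b}.

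Having established $\hat{s}<\hat{t}$, let $\gamma$ be the first ordinal at which they differ. By Lemma \ref{lem}(1), $\hat{s}_{\restriction\gamma}=\hat{t}_{\restriction\gamma}$, call it $w$. I would show $w$ separates $S$ and $T$: since $\hat{s}(\gamma)=-$, the surreal $w=\hat{s}_{\restriction\gamma}$ satisfies $\hat{s}<w$ by the order definition (the first place where they differ is $\gamma$, where $\hat{s}$ is undefined after restriction—rather $w$ is undefined at $\gamma$ while $\hat s(\gamma)=-$), hence $u<\hat{s}<w$-type reasoning, or more carefully $u<w$ for every $u\in S$ because $u<\hat{t}$ and $u_{\restriction\beta}\leq t$ propagates through the common initial segment; symmetrically $w<v$ for every $v\in T$. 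The cleanest route is to apply Theorem \ref{1b} and its dual to $w$ directly, checking $s\leq w_{\restriction\alpha}$ and $w_{\restriction\beta}\leq t$ from $w=\hat{s}_{\restriction\gamma}=\hat{t}_{\restriction\gamma}$ together with $\gamma>\alpha$ or the equality $w_{\restriction\alpha}=s$ when $\gamma\geq\alpha$.

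Finally I would prove minimality and the prolongment claim. Suppose $w'$ is any separating surreal. By Theorem \ref{1b} applied to $w'$ we get $s\leq w'_{\restriction\alpha}$, and by the dual $w'_{\restriction\beta}\leq t$. I would show these two conditions force $w'_{\restriction\gamma}=w$, i.e.\ $w'$ agrees with $w$ on the first $\gamma$ places, whence $w'$ is a prolongment of $w=\hat{s}_{\restriction\gamma}$. The mechanism is that on the interval below $\gamma$ the element $\hat{s}=\hat{t}$ is pinned down by the inequalities $s\leq \hat{s}_{\restriction\alpha}$ and $\hat{t}_{\restriction\beta}\leq t$ being tight, so any separating surreal must reproduce that initial segment; in particular no separating surreal can be shorter than $w$, establishing that $w$ is the shortest. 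The prolongment statement then follows since agreeing with $w$ on all of its domain is exactly being a prolongment of $w$.
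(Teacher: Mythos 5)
Your proposal is correct and follows essentially the same route as the paper: separation of $\hat{s}$ and $\hat{t}$ via Theorem \ref{ult} and Corollary \ref{corlen}, the comparison $\hat{s}<\hat{t}$ by examining the first divergence using the constant tails together with Theorem \ref{1b}, and the squeeze $\hat{s}_{\restriction\gamma}\leq u_{\restriction\gamma}\leq \hat{t}_{\restriction\gamma}$ for minimality and the prolongment claim. The only point worth pinning down is that the inequalities driving the comparison are obtained by applying Theorem \ref{1b} with $z=\hat{t}$ (and its dual with $z=\hat{s}$), legitimate precisely because $\hat{t}$ and $\hat{s}$ have already been shown to separate $S$ and $T$; this yields $s\leq\hat{t}_{\restriction\alpha}$ and $\hat{s}_{\restriction\beta}\leq t$, which is exactly what the paper uses.
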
   

\begin{proof}
Consider the surreal $\hat{ \hat{ s }}$
constructed in the proof of Theorem \ref{ult},
taking $\eta > \varepsilon $.
It might  happen that $\hat{ \hat{ s }}$
is longer than $ \hat{ s }$; however, since $ \varepsilon+1 \geq \alpha$,
we get from Corollary \ref{corlen} that   
$ \hat{ s }$ still separates $S$ and $T$.
This is the same argument as the one in the proof
of Corollary \ref{corult}  
(so far, we could have done with prolongments of length $\varepsilon$,
rather than $\varepsilon+1$.
Adding $1$ simplifies 
the subsequent argument).
Symmetrically, $ \hat{ t }$ separates $S$ and $T$. 

By Theorem \ref{1b},
we have
$ \hat{ s } _{ \restriction \alpha } =  s  \leq \hat{ t } _{ \restriction \alpha }  $ and,
symmetrically,
$ \hat{ s } _{ \restriction \beta } \leq t = \hat{ t } _{ \restriction \beta }     $.
Hence, if 
$ \delta  = \min \{ \alpha, \beta   \}  $, then
 $ \hat{ s } _{ \restriction \delta }  \leq  \hat{ t } _{ \restriction \delta }  $,
by Lemma \ref{lem}(2).
Moreover, $ \hat{ s } \neq \hat{ t }$,
since the last element of $s$ is $-$
and the last element  of $t$ is  $+$
(here we are using the assumption that 
the prologments $ \hat{ s }$ and $ \hat{ t }$ have length
$> \max \{ \alpha, \beta   \}$).
Let $\gamma$ be the smallest ordinal
such that $ \hat{ s }( \gamma ) \neq  \hat{ t }( \gamma )$.
If $\gamma < \delta $, then
$ \hat{ s }( \gamma ) <  \hat{ t }( \gamma )$, since
 $ \hat{ s } _{ \restriction \delta }  \leq  \hat{ t } _{ \restriction \delta }  $.
If $\gamma \geq  \delta $, then,
by construction,
either 
$\hat s(\gamma) = -$
or $ \hat t(\gamma) =+$,
hence 
$ \hat{ s }( \gamma ) <  \hat{ t }( \gamma )$
in this case, as well.
In both cases, we get $ \hat{ s }< \hat{ t }$.

By Lemma \ref{lem}(1) and  the above paragraph we have 
$ \hat{ s } \leq  \hat{ s } _{ \restriction \gamma  }  =
 \hat{ t } _{ \restriction \gamma  } \leq   \hat{ t }   $.\footnote{Actually,
 we have $ \hat{ s } <   \hat{ s } _{ \restriction \gamma  }  =
 \hat{ t } _{ \restriction \gamma  } <  \hat{ t }   $,
since $\gamma \leq \varepsilon $, hence both 
$ \hat{ s }( \gamma )$ and $ \hat{ t }( \gamma ) $ are defined,
hence $ \hat{ s }( \gamma ) = -$ and $ \hat{ t }( \gamma ) =+$,
since they are distinct.
However we do not need the strict inequality in the proof.}
Since both $ \hat{ s }$ and $ \hat{ t }$
separate $S$ and $T$, then  
$ \hat{ s } _{ \restriction \gamma  } $
separates $S$ and $T$. 
Suppose that $u$ is any other separating element.
By Theorem \ref{1b},
$s \leq u _{ \restriction  \alpha }  $.
Since the prolongment $ \hat{ s }$ is obtained by adding minuses
to $s$, we also have    
$ \hat{ s }  \leq u _{ \restriction  \varepsilon  }  $.
Symmetrically, 
$  u _{ \restriction  \varepsilon  } \leq  \hat{ t }  $.
By Lemma \ref{lem}(2) and since 
$\gamma \leq \varepsilon $, we get  
$ \hat{ s } _{ \restriction  \gamma }    \leq
 u _{ \restriction  \gamma   }  \leq  \hat{ t } _{ \restriction  \gamma }   $, 
but 
$ \hat{ s } _{ \restriction  \gamma }    =  \hat{ t } _{ \restriction  \gamma }   $,
hence
$  u _{ \restriction  \gamma   } = \hat{ s } _{ \restriction  \gamma }    $,
that is, 
$  u $ is a prolongment of $  \hat{ s } _{ \restriction  \gamma }    $.
\end{proof}

It is now not difficult to evaluate explicitly the 
shortest surreal separating $S$ and $T$.

\begin{definition} \labbel{sep}
 If $(s, t)$ is an ordered pair of  surreals, 
the (ordered) \emph{separator} $\sep(s,t)$ 
of $s$ and  $t$ is defined in the following way.   

(a) If $s=t$, we let  $\sep(s,t) =s=t$.

(b) If there is some $\gamma$ 
such that $s(\gamma) \neq t( \gamma )$ are both defined, choose
$\gamma$  minimal and let $\sep(s,t) = s _{ \restriction \gamma } =
 t _{ \restriction \gamma } $. 
Notice that $s _{ \restriction \gamma } =
 t _{ \restriction \gamma } $ by Lemma \ref{lem}(1).
 
(c) If $t$ is a (proper) prolongment  
of $s$, say, $\ell (s) = \alpha $,
consider the first ordinal $\varepsilon \geq \alpha $
such that $t (\varepsilon) = +$ and let
  $\sep(s,t) = t _{ \restriction \varepsilon }$.
If no such $\varepsilon$ exists,   
that is, $t (\varepsilon) = -$, for every $\varepsilon \geq \alpha $,
then let  
 $\sep(s,t) = t $.

(d) Symmetrically, 
if $s$ is a prolongment  
of $t$ with  $\ell (t) = \beta  $,
let
  $\sep(s,t) = s _{ \restriction \varepsilon }$,
for the least  $\varepsilon \geq \beta  $
such that $s( \varepsilon ) = -$, if such a $\varepsilon$ exists,  
and let  $\sep(s,t) = s $ otherwise.

Notice that, because of Lemma \ref{lem}(1),
the above definition covers all possible cases, that is,
 $\sep(s,t)$ is defined for every pair $(s,t)$ of surreals. 
 \end{definition}   

Notice that, due to the last two clauses,
it might happen that 
$\sep(s,t) \neq \sep(t,s)$.
For example, 
if $s=+$ and $t=+-+$,
then   
$\sep(s,t) = +-$,
while
$\sep(t,s) = +$.

\begin{theorem} \labbel{boh}
If $S < T$, then
$w =\sep(\sup ^* S, \inf ^* T)$ 
 is the shortest surreal separating $S$ and $T$.
 Moreover, any $u$ 
 separating $S$ and $T$ is a prolongment of $w$.
 \end{theorem}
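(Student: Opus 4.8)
The plan is to piggyback on Theorem \ref{thmthm}, which has already done the hard work. I retain all the notation introduced there: set $s=\sup^*S$, $t=\inf^*T$, $\alpha=\ell(s)$, $\beta=\ell(t)$, $\varepsilon=\max\{\alpha,\beta\}$, $\delta=\min\{\alpha,\beta\}$, let $\hat s$ and $\hat t$ be the prolongments of $s$ and $t$ to length $\varepsilon+1$ obtained by adding minuses and pluses respectively, and let $\gamma$ be the least ordinal with $\hat s(\gamma)\neq\hat t(\gamma)$. Theorem \ref{thmthm} states that $\hat s_{\restriction\gamma}=\hat t_{\restriction\gamma}$ is the shortest surreal separating $S$ and $T$ and that every separating surreal is a prolongment of it. Hence both conclusions of the present statement follow immediately once I verify the single identity $\sep(s,t)=\hat s_{\restriction\gamma}$. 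Everything thus reduces to comparing the explicit recipe of Definition \ref{sep}, applied to the pair $(s,t)$, with the abstractly described common initial segment $\hat s_{\restriction\gamma}$. The computation is eased by the fact that $\hat s$ copies $s$ below $\alpha$ and is constantly $-$ from $\alpha$ on, while $\hat t$ copies $t$ below $\beta$ and is $+$ at the top coordinate $\varepsilon$; and by the footnote in Theorem \ref{thmthm}, which gives $\hat s(\gamma)=-$ and $\hat t(\gamma)=+$.

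First I would dispose of the two easy cases of Definition \ref{sep}. If $s=t$ (case (a)), then $\alpha=\beta=\varepsilon$, the two prolongments agree with $s$ below $\alpha$ and differ only at $\alpha$, so $\gamma=\alpha$ and $\hat s_{\restriction\gamma}=s=\sep(s,t)$. If $s$ and $t$ first differ, in the extended sense, at a coordinate $\gamma_0$ where both are defined (case (b)), then $\gamma_0<\delta$, so below $\gamma_0$ the appended signs are not yet reached and $\hat s$, $\hat t$ still agree, whereas at $\gamma_0$ they disagree because $s$ and $t$ do; thus $\gamma=\gamma_0$ and $\hat s_{\restriction\gamma}=s_{\restriction\gamma_0}=\sep(s,t)$, the last equality by Lemma \ref{lem}(1).

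The crux is the prolongment cases (c) and (d). Suppose $s$ is a proper initial segment of $t$ (case (c)), so $\alpha<\beta=\varepsilon$. Below $\alpha$ both $\hat s$ and $\hat t$ reproduce $s=t_{\restriction\alpha}$; from $\alpha$ upward $\hat s$ is identically $-$, while $\hat t$ equals $t$ on the interval $[\alpha,\beta)$ and is $+$ at $\beta$. Consequently $\gamma$ is exactly the first coordinate $\geq\alpha$ at which $\hat t$ takes the value $+$: it is the least $\varepsilon'\geq\alpha$ with $t(\varepsilon')=+$ when such an $\varepsilon'$ exists, and it is $\beta$ otherwise. In the former subcase the appended minuses of $\hat s$ coincide with the minuses of $t$ strictly below that first $+$, so $\hat s_{\restriction\gamma}=t_{\restriction\gamma}$; in the latter $\hat s_{\restriction\beta}=t_{\restriction\beta}=t$. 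These are precisely the two outputs prescribed by Definition \ref{sep}(c), and case (d) follows by the symmetric argument. Assembling the four cases yields $\sep(s,t)=\hat s_{\restriction\gamma}$, as required.

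I expect the only genuinely delicate point to be the boundary subcase just above, in which $t$ carries no $+$ beyond $\alpha$ at all. There the first disagreement between $\hat s$ and $\hat t$ is forced all the way up to the single sign appended at the top coordinate $\beta$, so that $\sep(s,t)$ turns out to be $t$ itself. It is exactly to make this subcase come out right---and, more basically, to guarantee that $\hat s$ and $\hat t$ disagree \emph{somewhere}---that Theorem \ref{thmthm} took its prolongments of length $\varepsilon+1$ rather than $\varepsilon$, and that clauses (c) and (d) of Definition \ref{sep} carry their separate ``if no such $\varepsilon$ exists'' provision. Once this alignment is respected, the verification is pure bookkeeping with the monotonicity of restrictions from Lemma \ref{lem}.
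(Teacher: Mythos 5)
Your proposal is correct and follows essentially the same route as the paper: both reduce the statement to Theorem \ref{thmthm} and then verify, by matching the clauses of Definition \ref{sep} against the first disagreement point $\gamma$ of $\hat{s}$ and $\hat{t}$, that $\sep(\sup^*S,\inf^*T)=\hat{s}_{\restriction\gamma}$. Your write-up is somewhat more explicit than the paper's (which largely says ``just follow the proof of \ref{thmthm}''), but the decomposition into cases and the handling of the boundary subcase where $t$ has no $+$ above $\alpha$ coincide with the paper's argument.
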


\begin{proof} 
Just follow the proof of \ref{thmthm}.

If $s=t$, then $ \hat{ s }$ and $ \hat{ t }$, as constructed in
the proof of  \ref{thmthm},
differ only by the last element, that is, $\gamma= \varepsilon $ 
and the shortest separating element turns out to be $s=t$.

If $\gamma <  \delta =  \inf \{ \alpha , \beta \} $, then case (b)
in Definition \ref{sep} applies.

If $\alpha \leq \beta $ and $t$ is a prolongment of $s$,
then the proof of Theorem \ref{thmthm} asks for the smallest $\gamma$ 
such that $ \hat{ s }( \gamma )$ and $ \hat{ t }( \gamma ) $ differ.
This is the first ordinal $\gamma \geq \alpha $ such that 
 $ \hat{ t }( \gamma ) =+$, since $ \hat{ s }$ assumes always the value $-$
above $\alpha$, $ \hat{ s }$ and $ \hat{ t }$ have equal length
and they are identical up to $\alpha$. 
The present case incorporates the case $s=t$, which we have treated separately
for clarity. 

The case when
 $\alpha \geq \beta $ and $s$ is  a prolongment of $t$
is symmetrical.
\end{proof}

\section{Remarks} \labbel{rmksec} 

(a) The surreal 
$\sup^* S$ is not the only surreal 
for which Theorem \ref{1b} is satisfied. 
 In fact, every prolongment $s'$ of  $\sup^* S$
obtained by adding minuses will obviously satisfy
Theorem \ref{1b}, in case  $\alpha$ there is taken to be the length
of $s'$. 
However, $\sup^* S$ is the shortest surreal for which
Theorem \ref{1b} holds. 

(b) Not every surreal $w$ can be obtained as $\sup^* S$,
for some set $S$ of surreals.
In fact, if  $w$ has some ``tail'' consisting only of minuses,
then $w$ cannot be obtained as $\sup^* S$, for some $S$.
Actually, this is an
if and only if condition. If $w$ has  $+$ as  a last element,
then $w= u \conc +$ for some surreal $u$, hence $w$
has the form    $\sup^* S$, for $S= \{ u \} $. 
If $w$ 
 has limit length and is not eventually $-$, 
then $w = \sup^* S$, where 
$S$ is the set of all initial segments of $w$
which are ``cut just below some $+$''.

(c) Remarks (a) and (b) above show that, anyway,
for every surreal $s$ there is some set 
$S$ such that the conclusion of Theorem \ref{1b} holds.

  (d)
Under the assumptions 
in Theorem \ref{1b}, we have that,   for every surreal
$z$, the following conditions are equivalent.
 \begin{enumerate}  
 \item
$S \leq  z$,
\item
either 
$s  \leq  z _{ \restriction  \alpha }   $,
or $z$  is a maximum of $S$.  
   \end{enumerate} 

Indeed,  $S \leq z$
if and only if either $S < z$,
or $z$ is the maximum of $S$.

(e) As we mentioned shortly after Definition \ref{supast},
the definition of $\sup ^* S$ can be obtained as a special case of 
 the s-limit \cite{M,L}, when $S$ has no maximum. 

It is possible to give a uniform definition of 
$\sup^*$  which takes into account also the case in which 
$S$ has a maximum.
If
the elements of $S$ are ordered in increasing order
as $( s_i) _{i \in I} $, then  
it is easy to see that    
$\sup^* S = \slim (s_i \conc +)$.
In other words, if $S$ is a set of surreals, let 
$S \conc + = \{ s \conc + \mid s \in S\} $.
Then $\sup^* S$ is obtained by taking the s-limit
of the elements of $S \conc +$, ordered
in increasing order.  A symmetrical remark applies
to $\inf ^* T$.  

\acknowledgement{We are grateful to Mark Kortink
for detecting a misprint in a previous version of this manuscript.}

\end{document}